\documentclass{amsart}
\usepackage{graphicx}
\usepackage{amsrefs}
\usepackage[colorlinks]{hyperref}
\vfuzz2pt 
\hfuzz2pt 

\newtheorem{theorem}{Theorem}[section]

\newtheorem{lemma}[theorem]{Lemma}

\theoremstyle{definition}
\newtheorem{definition}[theorem]{Definition}
\newtheorem{remark}[theorem]{Remark}
\newtheorem{example}[theorem]{Example}

\numberwithin{equation}{section}

\begin{document}
\title[]{Partial-dual genus polynomial of graphs}
\author{Zhiyun Cheng}
\address{School of Mathematical Sciences, Beijing Normal University, Beijing 100875, China}
\email{czy@bnu.edu.cn}
\subjclass[2020]{05C10, 05C65}
\keywords{partial-dual genus polynomial, intersection graph}

\begin{abstract}
Recently, Chmutov introduced the partial duality of ribbon graphs, which can be regarded as a generalization of the classical Euler-Poincaré duality. The partial-dual genus polynomial $^\partial\varepsilon_G(z)$ is an enumeration of the partial duals of $G$ by Euler genus. For an intersection graph derived from a given chord diagram, the partial-dual genus polynomial can be defined by considering the ribbon graph associated to the chord diagram. In this paper, we provide a combinatorial approach to the partial-dual genus polynomial in terms of intersection graphs without referring to chord diagrams. After extending the definition of the partial-dual genus polynomial from intersection graphs to all graphs, we prove that it satisfies the four-term relation of graphs. This provides an answer to a problem proposed by Chmutov in \cite{Chm2023}.
\end{abstract}
\maketitle

\section{Introduction}\label{section1}
As a generalization of the fundamental concept of the Euler-Poincaré dual of an embedded graph, the partial dual of ribbon graphs with respect to a subset of edges was proposed by Chmutov in \cite{Chm2009} to realize the Jones polynomial as a specialization of the Bollobás-Riordan polynomial. Unlike the classical Euler-Poincaré dual, which preserves the genus of the embedded surface, the genus can be changed under a partial dual. For this reason, Gross, Mansour and Tucker introduced the notion of partial-dual genus polynomial $^\partial\varepsilon_G(z)$ in \cite{GMT2020}, which enumerates the partial duals of the ribbon graph $G$ by Euler genus. This polynomial has been intensively studied in the last few years, see \cite{GMT2021,GMT2021+,YJ2022,YJ2022+,CF2021,CL2024}.

Chord diagrams play an important role in Vassiliev's finite type invariant theory. Many knot invariants are determined by the intersection graphs of these chord diagrams. For example, in \cite{BG1996} Bar-Natan and Garoufalidis found that the weight system corresponding to the Conway polynomial can be described in terms of the adjacency matrix of the intersection graph. Later, this result was generalized by Mellor \cite{Mel2003} by describing the weight systems associated to HOMFLY polynomial and Kauffman polynomial in terms of the adjacency matrix of the intersection graph. Recall that both the HOMFLY polynomial and Kauffman polynomial are unchanged under mutations. Knot invariants does not distinguish mutants are described precisely by Chmutov and Lando in \cite{CL2007}. Actually, it was proved that a knot invariant does not distinguish mutants if and only if its weight system only depends on the intersection graphs of chord diagrams. 

For a given chord diagram, after attaching a vertex-disc to the circle and thickening the chords, one obtains a ribbon graph with exactly one vertex-disc. It is worth emphasizing that in order to investigate the partial-dual genus polynomial of ribbon graphs, it is sufficient to focus on this kind of ribbon graphs, say bouquets. Recently, Chmutov proved that the partial-dual genus polynomial considered as a function on chord diagrams satisfies the four-term relation in Vassiliev's finite type invariant theory \cite{Chm2023}. In \cite{YJ2022}, Qi Yan and Xian'an Jin proved that if a ribbon graph $G$ is derived from a chord diagram, then the partial-dual genus polynomial $^\partial\varepsilon_G(z)$ depends only on the intersection graph of the chord diagram. More precisely, in \cite[Theorem 5.2]{YJ2022}, the authors gave a recurrent formula for the partial-dual genus polynomial when the ribbon graph is a bouquet satisfying some special condition. In this paper, we provide a combinatorial approach to the partial-dual genus polynomial in terms of intersection graphs without referring to chord diagrams.

\begin{theorem}\label{theorem1}
Let $D$ be a chord diagram and $G$ the associated ribbon graph, then the partial-dual genus polynomial
\begin{center}
$^\partial\varepsilon_{G}(z)=\sum\limits_{A\subseteq E(G)}z^{\emph{rank}(M_A)+\emph{rank}(M_{A^c})}$.
\end{center}
\end{theorem}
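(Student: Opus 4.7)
The plan is to establish, for every $A \subseteq E(G)$, the pointwise identity
\[
\gamma(G^A) \;=\; \mathrm{rank}(M_A) + \mathrm{rank}(M_{A^c}),
\]
where $\gamma$ denotes the Euler genus; summing $z^{\gamma(G^A)}$ over all $A$ then gives the theorem directly from the definition $^\partial\varepsilon_G(z) = \sum_{A \subseteq E(G)} z^{\gamma(G^A)}$. The proof will combine three ingredients: the Euler--Poincar\'e formula $\gamma(H) = 2 - v(H) + e(H) - f(H)$ for a connected ribbon graph $H$; the standard description of the vertex set of the partial dual $G^A$ as the set of boundary components of the spanning ribbon subgraph on the edges of $A$; and the classical theorem (Moran, later refined by Soboleva and Lando) that the Euler genus of a bouquet equals the mod-$2$ rank of the adjacency matrix of the intersection graph of its chord diagram.

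First I would compute $v(G^A)$. Since $G$ is the bouquet associated with $D$, the spanning ribbon subgraph of $G$ on the edge subset $A$ is itself a bouquet whose chord diagram has intersection matrix $M_A$. By the rank-equals-genus theorem its Euler genus is $\mathrm{rank}(M_A)$, so the Euler formula applied to this one-vertex subribbon graph gives its boundary-component count as $1 + |A| - \mathrm{rank}(M_A)$, which is exactly $v(G^A)$.

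Next I would compute $f(G^A)$ by combining the identity $(G^A)^\ast = G^{A^c}$ (a consequence of $(G^A)^B = G^{A\triangle B}$ together with $G^{E(G)} = G^\ast$) with the elementary relation $f(H) = v(H^\ast)$. Applying the first step to the complementary subset then yields $f(G^A) = v(G^{A^c}) = 1 + |A^c| - \mathrm{rank}(M_{A^c})$. Substituting these vertex and face counts, along with $e(G^A) = |A| + |A^c|$, into the Euler--Poincar\'e formula makes the constants $2$, the two $1$'s and the cardinalities $|A|$, $|A^c|$ cancel, leaving
\[
\gamma(G^A) = \mathrm{rank}(M_A) + \mathrm{rank}(M_{A^c}),
\]
as required.

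The principal obstacle is the rank-equals-genus input. This statement is folklore in the ribbon-graph literature, but to keep the argument self-contained I would sketch a short induction on the number of chords: deleting a chord removes the corresponding row and column of $M$, and one can track the resulting change in the boundary-component count against the change in rank over the two-element field, splitting into cases according to whether the chord is isolated in the intersection graph, interlaces with others, or is linearly dependent on the remaining rows. Once this ingredient is in place, the remainder is routine manipulation of the Euler characteristic and of the standard formalism of partial duals.
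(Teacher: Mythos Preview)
Your argument is correct and follows essentially the same route as the paper: both proofs establish the pointwise identity $\varepsilon(G^A)=\mathrm{rank}(M_A)+\mathrm{rank}(M_{A^c})$ by reducing to the rank--genus fact for bouquets (stated in the paper as Beck's lemma, $|F(G)|=\mathrm{corank}(M_I)+1$, which is equivalent to your ``Euler genus equals $\mathrm{rank}(M)$''). The only packaging difference is that the paper quotes \cite[Corollary~2.3]{GMT2020} for the decomposition $\varepsilon(G^A)=\varepsilon(F_A)+\varepsilon(F_{A^c})$, whereas you effectively re-derive that identity from $v(G^A)=|F(F_A)|$ together with $(G^A)^\ast=G^{A^c}$ and $f(H)=v(H^\ast)$; your version is thus slightly more self-contained but otherwise the same computation.
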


Here $A^c$ denotes the complement of $A$ in $E(G)$. Both the matrices $M_A$ and $M_{A^c}$ can be obtained from the adjacency matrix of the intersection graph of $D$ by deleting some rows and columns. The precise definitions of them can be found in Section \ref{section3}. The equation above can be regarded as the definition of the partial-dual genus polynomial of all graphs, not necessary limited to intersection graphs. In particular, this polynomial satisfies the four-term relation of graphs, which was introduced by Lando in \cite{Lan2000}. The precise definition can be found in Section \ref{section3}.

\begin{theorem}\label{theorem2}
Let $I$ be a simple graph, then the partial-dual genus polynomial satisfies the following four-term relation
\begin{center}
$^\partial\varepsilon_{I}(z)-^\partial\varepsilon_{I_{ab}'}(z)-^\partial\varepsilon_{\tilde{I}_{ab}}(z)+^\partial\varepsilon_{\tilde{I}_{ab}'}(z)=0$,
\end{center}
here $a$ and $b$ are two vertices of the graph $I$.
\end{theorem}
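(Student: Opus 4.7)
The plan is to apply Theorem~\ref{theorem1}, extended as the definition of $^\partial\varepsilon$ for an arbitrary simple graph, so that each of the four polynomials in the statement becomes a sum $\sum_{A\subseteq V(I)}z^{r(M_A)+r(M_{A^c})}$ over principal submatrices of the $\mathbb{F}_2$-adjacency matrix of the relevant graph. I would then partition the index set by the intersection $A\cap\{a,b\}$ and show that each of the four classes contributes zero to the alternating sum.

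The two mixed classes with $|A\cap\{a,b\}|=1$ vanish immediately. If, say, $a\in A$ and $b\notin A$, then the edge-toggle $(a,b)$ has no effect on either $M_A$ or $M_{A^c}$, since its two endpoints lie in different parts; hence the pair $(r(M_A),r(M_{A^c}))$ coincides for $I$ and $I'_{ab}$, and also for $\tilde I_{ab}$ and $\tilde I'_{ab}$, so the four corresponding monomials cancel in pairs. The case $a\notin A,\ b\in A$ is symmetric.

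For $A$ with $\{a,b\}\subseteq A$, all four graph operations fix $M_{A^c}$ (because $a,b\notin A^c$), so the contribution of such an $A$ reduces to $z^{r(M_{A^c})}\bigl(z^{\beta_0}-z^{\beta_1}-z^{\beta_2}+z^{\beta_3}\bigr)$, where $\beta_0,\beta_1,\beta_2,\beta_3$ are the ranks of the $A$-principal submatrices of $I$, $I'_{ab}$, $\tilde I_{ab}$, $\tilde I'_{ab}$, respectively. Because the operations $(\cdot)'_{ab}$ and $\widetilde{(\cdot)}_{ab}$ commute with taking the induced subgraph on any set containing $\{a,b\}$, I can restrict to $I[A]$; combined with the symmetric reduction for the class $\{a,b\}\subseteq A^c$, it suffices to prove, for every simple graph with distinguished vertices $a,b$, the multiset identity
\[
\{\,r(M_I),\ r(M_{\tilde I'_{ab}})\,\}=\{\,r(M_{I'_{ab}}),\ r(M_{\tilde I_{ab}})\,\}.
\]

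Finally I would prove this identity by a conjugation trick over $\mathbb{F}_2$. Writing $M=M_I$ and letting $w=Me_a+M(a,b)e_b$ be the characteristic vector of $N_I(a)\setminus\{b\}$, one uses $M(a,a)=0$ and $e_a^{T}e_b=0$ to verify
\[
M_{\tilde I_{ab}}=M+e_b w^{T}+we_b^{T}=(\operatorname{Id}+e_be_a^{T})\,M\,(\operatorname{Id}+e_ae_b^{T}).
\]
Since $\operatorname{Id}+e_be_a^{T}$ squares to $\operatorname{Id}$ over $\mathbb{F}_2$, this is an honest $\mathbb{F}_2$-congruence and $r(M_{\tilde I_{ab}})=r(M_I)$. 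The same identity with $M$ replaced by $M_{I'_{ab}}$—which remains symmetric with zero diagonal and yields the same $w$, because toggling the edge $ab$ does not change $N_{I'_{ab}}(a)\setminus\{b\}$—gives $r(M_{\tilde I'_{ab}})=r(M_{I'_{ab}})$. The two rank equalities immediately imply the required multiset identity. The main obstacle is spotting this conjugation presentation of the tilde operation; once it is in hand, the rest is routine case analysis.
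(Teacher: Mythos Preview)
Your proposal follows the same strategy as the paper: partition the subsets $A\subseteq V(I)$ according to $A\cap\{a,b\}$, dispose of the two mixed classes because the $(a,b)$ edge-toggle cannot affect either $M_A$ or $M_{A^c}$ when $a$ and $b$ lie on opposite sides, and for the classes with $\{a,b\}\subseteq A$ (resp.\ $\{a,b\}\subseteq A^c$) reduce to the rank equalities $\operatorname{rank}M_A(I)=\operatorname{rank}M_A(\tilde I_{ab})$ and $\operatorname{rank}M_A(I'_{ab})=\operatorname{rank}M_A(\tilde I'_{ab})$. The paper establishes these only by displaying a representative example and invoking ``elementary linear algebra'' (in effect, adding row and column $b$ to row and column $a$); your explicit $\mathbb{F}_2$-congruence packages that same row/column operation uniformly and is a cleaner justification.

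One small slip: under the paper's convention, $\tilde I_{ab}$ toggles the adjacencies of $a$ with $N_I(b)\setminus\{a\}$, so the correct identity is
\[
M_{\tilde I_{ab}}=(\operatorname{Id}+e_ae_b^{T})\,M\,(\operatorname{Id}+e_be_a^{T})=M+e_a u^{T}+u\,e_a^{T},
\]
where $u=Me_b+M(a,b)e_a$ is the characteristic vector of $N_I(b)\setminus\{a\}$. Your formula $M+e_bw^{T}+we_b^{T}$ with $w$ supported on $N_I(a)\setminus\{b\}$ actually yields $M_{\tilde I_{ba}}$. Swapping $a$ and $b$ in your conjugation fixes this, and the rest of the argument goes through unchanged.
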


The rest of this paper is arranged as follows. In Section \ref{section2}, we recall the definition of the partial-dual genus polynomial and intersection graphs of chord diagrams. Section \ref{section3} is devoted to the proof of Theorem \ref{theorem1} and Theorem \ref{theorem2}. Finally, in Section \ref{section4} we provide a recurrent formula for the partial-dual genus polynomial of graphs, which extends the corresponding result of Qi Yan and Xian'an Jin given in \cite{YJ2022}. The relation between the partial-dual genus polynomial and the newly introduced skew characteristic polynomial is also discussed.

\section{Partial-dual genus polynomial and intersection graph}\label{section2}
Consider a cellular embedding of a graph in a (not necessarily orientable) surface $\Sigma$, a regular neighborhood of the graph in $\Sigma$ is called the associated \emph{ribbon graph} of the embedded graph. In other words, a ribbon graph $G$ is a 2-dimensional manifold represented as the union of two sets, the set $V(G)$ of vertex-discs and the set $E(G)$ of edge-ribbons. The discs in the complement of $G$ in $\Sigma$ form the set of face-discs $F(G)$. The Euler-Poincaré dual of $G$ defines a new ribbon graph $G^*$ in $\Sigma$, which has vertex-discs $V(G^*)=F(G)$, edge-ribbons $E(G^*)=E(G)$ and face-discs $F(G^*)=V(G)$. Since both $G$ and $G^*$ admit a cellular embedding in $\Sigma$, they share the same Euler genus. Recall that the \emph{Euler genus} $\varepsilon(G)$ of a ribbon graph $G$ is equal to twice the genus of $\Sigma$ if $\Sigma$ is orientable, otherwise it is defined to be the crosscap number of $\Sigma$. More precisely, if we use $c(G)$ to denote the number of components of $G$, then the Euler genus $\varepsilon(G)=2c(G)-|V(G)|+|E(G)|-|F(G)|$.

In \cite{Chm2009}, Chmutov generalized the concept of Euler-Poincaré dual to the partial dual with respect to a subset of $E(G)$. Choose a subset $A\subseteq E(G)$, the \emph{partial dual} of $G$ with respect to $A$ is defined as follows. First, consider the spanning surface $F_A$, which consists of $V(G)$ and only the edges in $A$. Then we glue a disc to each boundary component of $F_A$ and remove the interior of all vertex-discs of $V(G)$. Now we obtain the partial dual ribbon graph $G^A$. Notice that $V(G^A)=F(F_A)$, and $E(G^A)=E(G)$, where edges not in $A$ are the same as that in $G$ but edges in $A$ are now reglued. It is not difficult to observe that $(G^A)^A=G$ and if we choose $A=E(G)$, then we have $G^A=G^*$. The reader is referred to \cite{Chm2009} for more information about the partial dual.

As we mentioned above, the Euler genus is preserved under the Euler-Poincaré dual. However, usually it is not invariant under the partial dual. The following definition was introduced by Gross, Mansour and Tucker in \cite{GMT2020}.

\begin{definition}
Let $G$ be a ribbon graph, the partial-dual genus polynomial of $G$ is defined as
\begin{center}
$^\partial\varepsilon_G(z)=\sum\limits_{A\subseteq E(G)}z^{\varepsilon(G^A)}$.
\end{center}
\end{definition}

Notice that here the sum in the definition runs over all the subsets of $E(G)$, together with the fact that $(G^A)^B=G^{A\cup B-A\cap B}$, it is easy to conclude that $^\partial\varepsilon_G(z)=^\partial\varepsilon_{G^A}(z)$ for any subset $A\subseteq E(G)$.

Now we turn to the concept of chord diagram. A \emph{chord diagram} is an oriented circle with finitely many chords inside. The vector space of chord diagrams modulo the four-term relation plays an important role in Vassiliev's finite type invariant theory, see Figure \ref{figure1}. There is a multiplication and a comultiplication on chord diagrams which makes this vector space into a bialgebra. Motivated by this, Lando introduced the so called 4-bialgebra of graphs in \cite{Lan2000}. Later we will revisit this concept in Section \ref{section3}.

\begin{figure}[h]
\centering
\includegraphics{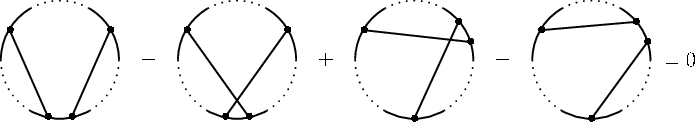}\\
\caption{Four-term relation of chord diagrams}\label{figure1}
\end{figure}

For a given chord diagram $D$, by attaching a disc to the circle and thickening all the chords, one obtains a ribbon graph which contains exactly one vertex-disc. Let us use $G_D$ to denote it. Now we can consider the partial-dual genus polynomial as a function on chord diagrams. Recently, Chmutov showed that the partial-dual genus polynomial on chord diagrams satisfies the four-term relation depicted in Figure \ref{figure1} \cite{Chm2023}. On the other hand, each chord diagram $D$ has an associated intersection graph $I_D$: each chord of $D$ corresponds to a vertex of $I_D$ and two vertices are connected by an edge if and only if the corresponding two chords intersect. See Figure \ref{figure2}.

\begin{figure}[h]
\centering
\includegraphics{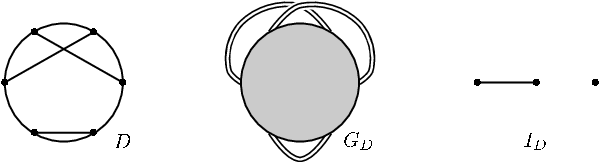}\\
\caption{Chord diagram $D$ and its ribbon graph and intersection graph}\label{figure2}
\end{figure}

It was proved in \cite{YJ2022} that the partial-dual genus polynomial of $G_D$ is completely determined by the intersection graph $I_D$. In other words, two chord diagrams share the same partial-dual genus polynomial if their intersection graphs are isomorphic. When we talk about the partial-dual genus polynomial of a chord diagram $D$, actually we mean the partial-dual genus polynomial of the corresponding ribbon graph. Since it is determined by the intersection polynomial, we can discuss the partial-dual genus polynomial of intersection graphs. This concept will be extended to all graphs in Section \ref{section3}.

\begin{remark}
In \cite{YJ2022}, Qi Yan and Xian'an Jin considered ribbon graphs with exactly one vertex-disc, i.e. bouquets. Untwisted edge and twisted edge are assigned with positive sign and negative sign respectively. Hence the corresponding intersection graph is a vertex-signed graph. However, the ribbon graph derived from a chord diagram is always orientable. Since all the vertices of the intersection graph are positive, we omit the sign in the current paper.
\end{remark}

\section{The proof the main result}\label{section3}
As before, for a given chord diagram $D$, let us use $G$ and $I$ to denote the associated ribbon graph and intersection graph, respectively. Assume the vertex set of $I$ is $\{v_1, \cdots, v_n\}$, the \emph{adjacency matrix} of $I$ is an $n\times n$ matrix $M_I\in \text{M}_n(\mathbb{Z}_2)$, where the entry appears in the $i$th row and $j$th column is equal to 1 if $v_i$ and $v_j$ are adjacent, otherwise it is equal to 0. In particular, all entries on the diagonal are equal to 0. Let $A$ be a subset of the chord set of $D$, then $A$ also can be regarded as a subset of $E(G)$. Let us use $I_A$ to denote the intersection graph corresponding to the chord diagram obtained from $D$ by removing all the chords not in $A$, and use $M_A$ to denote the adjacency matrix of it. Note that the size of $M_A$ is equal to $|A|$.

In order to prove Theorem \ref{theorem1}, we need the following lemma. This result was first proved by Beck in \cite{Bec1977}, later it was extended to the signed case in \cite{Mel2003}.   

\begin{lemma}
Let $D$ be a chord diagram and $G$ the associated ribbon graph, then $|F(G)|=\emph{corank}(M_I)+1$.
\end{lemma}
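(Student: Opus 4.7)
The plan is to reinterpret both sides via the mod-$2$ intersection pairing on $H_{1}(G;\mathbb{Z}_{2})$, viewing $G$ as a compact orientable surface with boundary.

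First, I would set up the homological picture. Since $D$ is a chord diagram, the ribbon graph $G$ is a bouquet: one vertex-disc with $n$ untwisted edge-ribbons attached, one per chord. Topologically $G$ is a compact connected orientable surface whose boundary consists of $|F(G)|$ circles. As $G$ deformation retracts onto a wedge of $n$ circles, $H_{1}(G;\mathbb{Z}_{2})$ has dimension $n$, with a natural basis $\gamma_{1},\dots,\gamma_{n}$, where $\gamma_{i}$ runs along the core of the $i$-th edge-ribbon and closes up through the vertex-disc.

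Next, I would compute the mod-$2$ intersection form in this basis. The self-intersections $\gamma_{i}\cdot\gamma_{i}$ vanish because $G$ is orientable. For $i\neq j$, the loops $\gamma_{i}$ and $\gamma_{j}$ can be isotoped to be disjoint outside the vertex-disc and transverse inside it; then $\gamma_{i}\cdot\gamma_{j}$ counts, mod $2$, the number of intersection points inside the vertex-disc, which equals $1$ precisely when the four endpoints of the chords $c_{i}$ and $c_{j}$ interlace on the boundary circle, i.e.\ exactly when $v_{i}$ and $v_{j}$ are adjacent in $I$. Hence the matrix of the intersection form in the basis $\{\gamma_{i}\}$ is exactly $M_{I}$.

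Finally, I would invoke Poincar\'e--Lefschetz duality: for any compact connected surface $\Sigma$ with non-empty boundary, the radical of the mod-$2$ intersection form on $H_{1}(\Sigma;\mathbb{Z}_{2})$ is the image of $H_{1}(\partial\Sigma;\mathbb{Z}_{2})\to H_{1}(\Sigma;\mathbb{Z}_{2})$. The boundary $\partial G$ is a disjoint union of $|F(G)|$ circles whose classes in $H_{1}(G;\mathbb{Z}_{2})$ satisfy a single linear relation, namely that their sum bounds $G$, so the image has dimension $|F(G)|-1$. Combining with the previous step yields $\text{corank}(M_{I})=\dim\ker M_{I}=|F(G)|-1$, as required. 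The main obstacle will be the intersection-form computation: one must check carefully that the basis cycles $\gamma_{i}$ admit representatives whose transverse intersections occur only inside the vertex-disc and are counted exactly by the chord-crossing pattern of $D$.
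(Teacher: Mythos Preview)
Your argument is correct. The paper does not actually supply a proof of this lemma; it merely quotes the result, attributing it to Beck (1977) with the signed extension in Mellor (2003). Beck's original treatment is combinatorial, phrased in terms of cycle decompositions of products of transpositions, whereas you give a self-contained topological proof via the mod-$2$ intersection pairing on $H_{1}(G;\mathbb{Z}_{2})$ and Poincar\'e--Lefschetz duality. Your route has the advantage of explaining \emph{why} the adjacency matrix appears: it is literally the Gram matrix of the intersection form in the core-loop basis, so the corank is forced to match the dimension of the radical, which in turn is identified with the image of $H_{1}(\partial G)\to H_{1}(G)$.

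One small point worth tightening: when you assert that the boundary classes satisfy \emph{exactly one} linear relation, the fact that their sum bounds $G$ gives at least one relation, but you should also note (via the long exact sequence of the pair $(G,\partial G)$ and $H_{2}(G,\partial G;\mathbb{Z}_{2})\cong\mathbb{Z}_{2}$) that the kernel of $H_{1}(\partial G)\to H_{1}(G)$ is one-dimensional, so there are no further relations. With that addition the argument is complete.
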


Now we give the proof of Theorem \ref{theorem1}.

\begin{proof}
Assume $|E(G)|=n$. According to the definition, the partial-dual genus polynomial of $G$ has the form
\begin{center}
$^\partial\varepsilon_{G}(z)=\sum\limits_{A\subseteq E(G)}z^{\varepsilon(G^A)}$.
\end{center}
According to \cite[Corollary 2.3]{GMT2020}, if $G$ is a bouquet, then the Euler genus 
\begin{center}
$\varepsilon(G^A)=\varepsilon(F_A)+\varepsilon(F_{A^c})$.
\end{center}
Recall that the Euler genus
\begin{center}
$\varepsilon(G)=2c(G)-|V(G)|+|E(G)|-|F(G)|$,
\end{center}
it follows that 
\begin{align*}
\varepsilon(F_A)&=2c(F_A)-|V(F_A)|+|E(F_A)|-|F(F_A)|\\
&=2-1+|E(F_A)|-\text{corank}(M_A)-1\\
&=|E(F_{A})|+\text{rank}(M_{A})-|A|\\
&=\text{rank}(M_{A}),
\end{align*}
and
\begin{align*}
\varepsilon(F_{A^c})&=2c(F_{A^c})-|V(F_{A^c})|+|E(F_{A^c})|-|F(F_{A^c})|\\
&=2-1+|E(F_{A^c})|-\text{corank}(M_{A^c})-1\\
&=|E(F_{A^c})|+\text{rank}(M_{A^c})-|A^c|\\
&=\text{rank}(M_{A^c}).
\end{align*}
Therefore, we have
\begin{center}
$\varepsilon(G^A)=\varepsilon(F_A)+\varepsilon(F_{A^c})=\text{rank}(M_{A})+\text{rank}(M_{A^c})$,
\end{center}
and hence
\begin{center}
$^\partial\varepsilon_{G}(z)=\sum\limits_{A\subseteq E(G)}z^{\text{rank}(M_A)+\text{rank}(M_{A^c})}$.
\end{center}
The proof is finished.
\end{proof}

The formula given in Theorem \ref{theorem1} allows us to extend the concept of partial-dual genus polynomial from intersection graphs to all graphs.

\begin{definition}\label{definition}
Let $I$ be a graph and $A$ a subset of the vertex set of $I$, the \emph{partial-dual genus polynomial} of $I$ is defined as
\begin{center}
$^\partial\varepsilon_{I}(z)=\sum\limits_{A\subseteq V(I)}z^{\text{rank}(M_A)+\text{rank}(M_{A^c})}$.
\end{center}
Here $M_A$ and $M_{A^c}$ denote the princepal submatrices of $M_I$ induced by $A$ and $A^c$.
\end{definition}

\begin{remark}
As an analogue of the partial-dual genus polynomial, Qi Yan and Xian'an Jin introduced the concept of twist polynomial for delta-matroids \cite{YJ2022+}. For a given delta-matroid $D=(E, \mathcal{F})$, the \emph{twist polynomial} of $D$ is defined as
\begin{center}
$^\partial w_D(z)=\sum\limits_{A\subseteq E}z^{w(D\ast A)}$,
\end{center}
where $D\ast A$ denotes the twist of $D$ with respect to $A$ and $w$ denotes the width. The reader is referred to \cite{YJ2022+} for the details. It was proved in \cite[Theorem 11]{YJ2022+} that two normal binary delta-matroids share the same twist polynomial if they have the same intersection graph. Similar to Theorem \ref{theorem1}, it is not difficult to find a formula for the twist polynomial in terms of intersection graphs without referring to normal binary delta-matroids.
\end{remark}

\begin{example}
Consider the complete graph $K_n$ on $n$ vertices. Notice that all the entries of adjacency matrix $M_{K_n}$ are equal to 1, except the diagonal, which are all 0. It follows that rank$(M_{K_n})=n$ if $n$ is even, otherwise rank$(M_{K_n})=n-1$. On the other hand, one also notices that the subgraph induced by $A\subseteq V(K_n)$ is the complete graph $K_{|A|}$. Next, we continue our discussion in two cases.
\begin{itemize}
  \item If $n$ is even, then one computes
  \begin{align*}
  ^\partial\varepsilon_{K_n}(z)&=\sum\limits_{A\subseteq V(K_n)}z^{\text{rank}(M_A)+\text{rank}(M_{A^c})}\\
  &=\sum\limits_{|A|=0}^n\binom{n}{|A|}z^{\text{rank}(M_A)+\text{rank}(M_{A^c})}\\
  &=\binom{n}{0}z^n+\binom{n}{1}z^{n-2}+\binom{n}{2}z^n+\cdots+\binom{n}{n-1}z^{n-2}+\binom{n}{n}z^n\\
  &=\sum\limits_{0\leq k\leq \frac{n}{2}}\binom{n}{2k}z^n+\sum\limits_{0\leq k\leq \frac{n-2}{2}}\binom{n}{2k+1}z^{n-2}\\
  &=2^{n-1}z^n+2^{n-1}z^{n-2}.
  \end{align*}
  \item If $n$ is odd, then one computes
  \begin{align*}
  ^\partial\varepsilon_{K_n}(z)&=\sum\limits_{A\subseteq V(K_n)}z^{\text{rank}(M_A)+\text{rank}(M_{A^c})}\\
  &=\sum\limits_{|A|=0}^n\binom{n}{|A|}z^{\text{rank}(M_A)+\text{rank}(M_{A^c})}\\
  &=\sum\limits_{|A|=0}^n\binom{n}{|A|}z^{n-1}\\
  &=2^nz^{n-1}.
  \end{align*}
\end{itemize}
\end{example}

\begin{example}
Consider the complete bipartite graph $K_{m, n}$. Notice that the adjacency matrix $M_{K_{m, n}}$ has the form 
$\begin{pmatrix}
\textbf{0}_{m\times m} & \textbf{1}_{m\times n}\\
\textbf{1}_{n\times m} & \textbf{0}_{n\times n}
\end{pmatrix}$, it follows that rank$(M_{m, n})=2$ if $m\geq1, n\geq1$ and rank$(M_{m, n})=0$ if either $m$ or $n$ equals 0. As a consequence, the partial-dual genus polynomial of $K_{m, n}$ has the form 
\begin{center}
$^\partial\varepsilon_{K_{m, n}}(z)=a_0+a_2z^2+a_4z^4$,
\end{center}
where $a_0, a_2, a_4$ are nonnegative integers. In order to determine $a_0, a_2, a_4$, let us suppose $V(K_{m, n})=\{v_1, \cdots, v_m, w_1, \cdots, w_n\}$, where each pair $v_i, w_j$ are adjacent. Let $A$ be a subset of $V(K_{m, n})$. 
\begin{itemize}
\item Note that $\text{rank}(M_A)+\text{rank}(M_{A^c})=0$ means $\text{rank}(M_A)=\text{rank}(M_{A^c})=0$, which happens if and only if $A=\{v_1, \cdots, v_m\}$ or $\{w_1, \cdots, w_n\}$. It follows that $a_0=2$.
\item For $a_2$, note that $\text{rank}(M_A)+\text{rank}(M_{A^c})=2$ implies that $\text{rank}(M_A)=2$, $\text{rank}(M_{A^c})=0$ or $\text{rank}(M_A)=0$, $\text{rank}(M_{A^c})=2$.
\begin{enumerate}
\item If $\text{rank}(M_A)=2$, $\text{rank}(M_{A^c})=0$, then $A=V(K_{m, n})$ or $\{v_1, \cdots, v_m\}$$\subseteq A$ and $1\leq |A\cap\{w_1, \cdots, w_n\}|\leq n-1$, or $\{w_1, \cdots, w_n\}$$\subseteq A$ and $1\leq|A\cap\{v_1, \cdots, v_m\}|\leq m-1$.
\item If $\text{rank}(M_A)=0$, $\text{rank}(M_{A^c})=2$, then $A=\emptyset$ or $\{v_1, \cdots, v_m\}\cap A=\emptyset$ and $1\leq |A\cap\{w_1, \cdots, w_n\}|\leq n-1$, or $\{w_1, \cdots, w_n\}\cap A=\emptyset$ and $1\leq|A\cap\{v_1, \cdots, v_m\}|\leq m-1$.
\end{enumerate}
As a consequence, $a_2=1+(2^n-2)+(2^m-2)+1+(2^n-2)+(2^m-2)=2^{m+1}+2^{n+1}-6$.
\item Since $a_0+a_2+a_4=2^{m+n}$, it follows that $a_4=2^{m+n}-2^{m+1}-2^{n+1}+4$.
\end{itemize}
Finally, we obtain 
\begin{center}
$^\partial\varepsilon_{K_{m, n}}(z)=2+(2^{m+1}+2^{n+1}-6)z^2+(2^{m+n}-2^{m+1}-2^{n+1}+4)z^4$.
\end{center}
\end{example}

Now we take a quick review of the four-term relation of graphs introduced by Lando in \cite{Lan2000}. Consider a simple graph $I$ and two vertices $a, b\in V(I)$, there are two associated graphs $I_{ab}'$ and $\tilde{I}_{ab}$. The graph $I_{ab}'$ is obtained from $I$ by removing the edge $ab$ if it exists, or by adding it if it does not exist. In other words, the only difference between the graph $I_{ab}'$ and $I$ is the adjacency between the vertices $a$ and $b$. For the other graph $\tilde{I}_{ab}$, suppose all the neighbours (except $a$ if it is) of $b$ are $\{c_1, \cdots, c_k\}$, then $\tilde{I}_{ab}$ is obtained from $I$ by changing the adjacency between $a$ and $c_i$ $(1\leq i\leq k)$. Note that $\tilde{I}_{ab}\neq \tilde{I}_{ba}$ in general, but $\widetilde{I_{ab}'}=(\tilde{I}_{ab})'$. We simply use $\tilde{I}_{ab}'$ to denote the composition of these two operations.

Let $f$ be a graph invariant valued in an abelian group, we say $f$ is a \emph{4-invaraint} if for any simple graph $I$ and any pair of vertices $a, b\in V(I)$, it satisfies the following four-term relation 
\begin{center}
$f(I)-f(I_{ab}')-f(\tilde{I}_{ab})+f(\tilde{I}_{ab}')=0$.
\end{center}
Several well known graph polynomials are 4-invariants, such as $(-1)^{|V(I)|}\chi(I)$, here $\chi(I)$ denotes the chromatic polynomial of $I$. Similar to the finite type invariants, the space of graphs modulo four-term relations can be equipped with a multiplication and a comultiplication to make itself into a bialgebra. The main motivation for introducing the four-term relation for graphs is that, the map from chord diagrams to intersection graphs induces a homomorphism from the set of chord diagrams modulo four-term relation to the set of intersection graphs modulo four-term relation \cite[Theorem 3.3]{Lan2000}. As a consequence, each 4-invariant determines a weight system. In \cite[Problem 2]{Lan2000}, Lando asked are there some other graph invariants that are 4-invariants? Theorem \ref{theorem2}  suggests that the extended partial-dual genus polynomial defined in Definition \ref{definition} provides such an example.

Now we give the proof of Theorem \ref{theorem2}.
\begin{proof}
The main idea of the proof is similar to that of \cite[Proposition 2.8]{Lan2000}. We sketch it here.

Choose two vertices $a, b\in V(I)$, we divide all the subsets of $V(I)$ into the disjoint union of four parts $V_1\cup V_2\cup V_3\cup V_4$, where
\begin{itemize}
  \item $V_1=\{A\subseteq V(I)|a\in A, b\in A^c\}$,
  \item $V_2=\{A\subseteq V(I)|a\in A^c, b\in A\}$,
  \item $V_3=\{A\subseteq V(I)|a\in A, b\in A\}$,
  \item $V_4=\{A\subseteq V(I)|a\in A^c, b\in A^c\}$.
\end{itemize}
Now the partial-dual genus polynomial of $I$ can be rewritten as
\begin{center}
$^\partial\varepsilon_{I}(z)=\sum\limits_{i=1}^4\sum\limits_{A\in V_i}z^{\text{rank}(M_A)+\text{rank}(M_{A^c})}=2\sum\limits_{i=1, 3}\sum\limits_{A\in V_i}z^{\text{rank}(M_A)+\text{rank}(M_{A^c})}$.
\end{center}

We continue the discussion according to the following two cases:
\begin{enumerate}
  \item If $A\in V_1$, i.e. $a\in A$ and $b\in A^c$, then the operation changing the adjacency between $a$ and $b$ has no effect on $I_A$. Therefore the two graphs $I_A(I)$ and $I_A(I_{ab}')$ are exactly the same, here we use $I_A(I)$ and $I_A(I_{ab}')$ to denote the subgraphs obtained from $I$ and $I_{ab}'$ by removing all vertices not in $A$, respectively. Similarly, we also have $I_A(\tilde{I}_{ab})=I_A(\tilde{I}_{ab}')$. It immediately follows that
\begin{center}
$\sum\limits_{A\in V_1}z^{\text{rank}(M_{A}(I))+\text{rank}(M_{A^c}(I))}=\sum\limits_{A\in V_1}z^{\text{rank}(M_{A}(I_{ab}'))+\text{rank}(M_{A^c}(I_{ab}'))}$
\end{center}
and
\begin{center}
$\sum\limits_{A\in V_1}z^{\text{rank}(M_{A}(\tilde{I}_{ab}))+\text{rank}(M_{A^c}(\tilde{I}_{ab}))}=\sum\limits_{A\in V_1}z^{\text{rank}(M_{A}(\tilde{I}_{ab}'))+\text{rank}(M_{A^c}(\tilde{I}_{ab}'))}$.
\end{center}
Here $M_A(I)$ denotes the adjacency matrix of $I_A(I)$ and other notations can be defined similarly.
  \item If $A\in V_3$, i.e. $\{a, b\}\subseteq A$ and hence $a, b\notin A^c$, thus the graphs $I_{A^c}(I)$, $I_{A^c}(I_{ab})$, $I_{A^c}(\tilde{I}_{ab})$, $I_{A^c}(\tilde{I}_{ab}')$ are all the same. Then we have
\begin{center}
$\text{rank}M_{A^c}(I)=\text{rank}M_{A^c}(I_{ab}')=\text{rank}M_{A^c}(\tilde{I}_{ab})=\text{rank}M_{A^c}(\tilde{I}_{ab}')$.
\end{center}
For the subgraphs spanned by the vertices in $A$, without loss of generality, we only consider the case that the vertex $b$ has exactly two neighbours, say $c$ and $d$. We also assume that $\{a, c\}$ are adjacent but $\{a, d\}$ and $\{c, d\}$ are not adjacent. The general case can be proved in a similar manner. Now we notice that the four adjacency matrices look like
\begin{center}
$M_A(I)=\begin{pmatrix}
0 & 0 & 1 & 0 & \ast \\
0 & 0 & 1 & 1 & \textbf{0} \\
1 & 1 & 0 & 0 & \ast \\
0 & 1 & 0 & 0 & \ast \\
\ast & \textbf{0} & \ast & \ast & \ast
\end{pmatrix}, M_A(I_{ab})=\begin{pmatrix}
0 & 1 & 1 & 0 & \ast \\
1 & 0 & 1 & 1 & \textbf{0} \\
1 & 1 & 0 & 0 & \ast \\
0 & 1 & 0 & 0 & \ast \\
\ast & \textbf{0} & \ast & \ast & \ast
\end{pmatrix}$
\end{center}

\begin{center}
$M_A(\tilde{I}_{ab})=\begin{pmatrix}
0 & 0 & 0 & 1 & \ast \\
0 & 0 & 1 & 1 & \textbf{0} \\
0 & 1 & 0 & 0 & \ast \\
1 & 1 & 0 & 0 & \ast \\
\ast & \textbf{0} & \ast & \ast & \ast
\end{pmatrix}, M_A(\tilde{I}_{ab}')=\begin{pmatrix}
0 & 1 & 0 & 1 & \ast \\
1 & 0 & 1 & 1 & \textbf{0} \\
0 & 1 & 0 & 0 & \ast \\
1 & 1 & 0 & 0 & \ast \\
\ast & \textbf{0} & \ast & \ast & \ast
\end{pmatrix}$
\end{center}
Notice that the second row and second column of each matrix contain exactly two nonzero entries, since the vertex $b$ has only two neighbours. By using some elementary linear algebra, one observes that
\begin{center}
$\text{rank}M_A(I)=\text{rank}M_A(\tilde{I}_{ab})$ 
\end{center}
and
\begin{center}
$\text{rank}M_A(I_{ab}')=\text{rank}M_A(\tilde{I}_{ab}')$.
\end{center}
\end{enumerate}

To sum up, we conclude that 
\begin{center}
$^\partial\varepsilon_{I}(z)-^\partial\varepsilon_{I_{ab}'}(z)-^\partial\varepsilon_{\tilde{I}_{ab}}(z)+^\partial\varepsilon_{\tilde{I}_{ab}'}(z)=0$.
\end{center}
\end{proof}

\section{Some discussions}\label{section4}
\subsection{A recurrent formula of the partial-dual genus polynomial}
In \cite[Theorem 5.2]{YJ2022}, Qi Yan and Xian'an Jin found a recurrent formula of the partial-dual genus polynomial for intersection graphs with at least one vertex of degree one. The following result extends this result from intersection graphs to all graphs. Note that the proof of \cite[Theorem 5.2]{YJ2022} is based on the calculation of the partial-dual genus polynomial of bouquets, the proof we give here is purely algebraic.

\begin{theorem}
Let $I$ be a graph and $a, b\in V(I)$ be two adjacent vertices, if the degree of $a$ is one, then
\begin{center}
$^\partial\varepsilon_{I}(z)=^\partial\varepsilon_{I-a}(z)+(2z^2)^\partial\varepsilon_{I-a-b}(z)$.
\end{center}
\end{theorem}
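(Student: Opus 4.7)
The plan is to expand $^\partial\varepsilon_I(z)$ directly from Definition \ref{definition} and split the sum over $A \subseteq V(I)$ into four classes according to whether each of $a$ and $b$ belongs to $A$. In every class I parametrize by $A'' := A \cap V(I-a-b)$, which ranges freely over subsets of $V(I-a-b)$, and rewrite $\text{rank}(M_A) + \text{rank}(M_{A^c})$ in terms of ranks of principal submatrices of $M_{I-a-b}$ or of $M_{I-a}$.

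The hypothesis $\deg(a)=1$ enters as follows. The $a$-row of $M_I$ has a unique nonzero entry, in position $b$. Consequently, for any $S \subseteq V(I)$ containing both $a$ and $b$, ordering indices as $(a, b, S \setminus \{a,b\})$ writes $M_S$ in the block form
\[
M_S = \begin{pmatrix} 0 & 1 & 0 \\ 1 & 0 & w^T \\ 0 & w & N \end{pmatrix},
\]
where $N$ is the principal submatrix of $M_{I-a-b}$ indexed by $S \setminus \{a,b\}$. The top-left $2 \times 2$ block is invertible over $\mathbb{Z}_2$, and its Schur complement equals $N$ exactly (the cross term vanishes because the top row of the off-diagonal block is zero); hence $\text{rank}(M_S) = 2 + \text{rank}(N)$. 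On the other hand, if $a \in S$ but $b \notin S$, the $a$-row and $a$-column of $M_S$ vanish, so $\text{rank}(M_S) = \text{rank}(M_{S \setminus \{a\}})$ is a principal submatrix of $M_{I-a-b}$. If $b \in S$ but $a \notin S$, then $M_S$ is already a principal submatrix of $M_{I-a}$.

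Feeding these three rules into the four membership cases for $A$, the two cases $\{a,b\} \subseteq A$ and $\{a,b\} \subseteq A^c$ each yield $\sum_{A''} z^{2 + \text{rank}(M_{A''}) + \text{rank}(M_{(A'')^c})}$, and together they contribute $(2z^2) \cdot {}^\partial\varepsilon_{I-a-b}(z)$. The two mixed cases give
\[
\sum_{A''} z^{\text{rank}(M_{A''}) + \text{rank}(M^{I-a}_{\{b\}\cup (A'')^c})} + \sum_{A''} z^{\text{rank}(M^{I-a}_{\{b\}\cup A''}) + \text{rank}(M_{(A'')^c})},
\]
which is exactly the decomposition of $^\partial\varepsilon_{I-a}(z)$ according to whether $b$ lies in $C$ or in $C^c$ as $C$ ranges over subsets of $V(I-a)$. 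Adding the four contributions produces the stated recurrence.

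The argument is essentially bookkeeping; the one genuinely algebraic point is the Schur-complement identity, which over $\mathbb{Z}_2$ is immediate because $\left(\begin{smallmatrix} 0 & 1 \\ 1 & 0 \end{smallmatrix}\right)$ is self-inverse and the relevant cross term $(0,w)\left(\begin{smallmatrix} 0 & 1 \\ 1 & 0 \end{smallmatrix}\right)(0,w)^T$ vanishes. I do not anticipate any substantive obstacle beyond keeping the four subcases straight.
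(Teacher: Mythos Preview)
Your proposal is correct and follows essentially the same approach as the paper: split $A\subseteq V(I)$ into four classes by membership of $a$ and $b$, then use that adjoining the degree-one vertex $a$ alone leaves the rank unchanged while adjoining both $a$ and $b$ raises it by~$2$. The only cosmetic difference is bookkeeping: the paper invokes the $A\leftrightarrow A^c$ symmetry up front to reduce to two classes, whereas you compute all four explicitly and then pair them; your Schur-complement justification is exactly the block-matrix row/column reduction the paper performs.
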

\begin{proof}
We divide all the subsets of the vertex set of $I$ into four parts $V_1, V_2, V_3, V_4$, where
\begin{itemize}
  \item $V_1=\{A\subseteq V(I)|a\in A, b\in A\}$,
  \item $V_2=\{A\subseteq V(I)|a\notin A, b\notin A\}$,
  \item $V_3=\{A\subseteq V(I)|a\in A, b\notin A\}$,
  \item $V_4=\{A\subseteq V(I)|a\notin A, b\in A\}$.
\end{itemize}
According to Definition \ref{definition}, we have
\begin{center}
\begin{align*}
^\partial\varepsilon_{I}(z)&=\sum\limits_{i=1}^4\sum\limits_{A\in V_i}z^{\text{rank}(M_A)+\text{rank}(M_{A^c})} \\
&=2\sum\limits_{A\in V_2}z^{\text{rank}(M_A)+\text{rank}(M_{A^c})}+2\sum\limits_{A\in V_4}z^{\text{rank}(M_A)+\text{rank}(M_{A^c})}.
\end{align*}
\end{center}
Let us still use $A^c$ to denote the complement of $A$ in $V(I)$. For $^\partial\varepsilon_{I-a}(z)$, one computes
\begin{center}
\begin{align*}
^\partial\varepsilon_{I-a}(z)&=\sum\limits_{A\in V_2}z^{\text{rank}(M_A)+\text{rank}(M_{A^c-a})}+\sum\limits_{A\in V_4}z^{\text{rank}(M_A)+\text{rank}(M_{A^c-a})} \\
&=2\sum\limits_{A\in V_4}z^{\text{rank}(M_A)+\text{rank}(M_{A^c-a})} \\
&=2\sum\limits_{A\in V_4}z^{\text{rank}(M_A)+\text{rank}(M_{A^c})}.
\end{align*}
\end{center}
The second equation follows from the fact that, in $I-a$, there is a one-to-one correspondence between the elements in $V_2$ and that in $V_4$, and each two of them make the same contribution to the sum. The reason for the third equation is, since $A\in V_4$, it follows that $b\in A$ and $b\notin A^c$. Due to the assumption that $b$ is the only neighbour of $a$, the vertex $a$ is isolated in the subgraph induced by $A^c$, therefore rank$(M_{A^c-a})=\text{rank}(M_{A^c})$.

On the other hand, for $^\partial\varepsilon_{I-a-b}(z)$, we have
\begin{center}
\begin{align*}
^\partial\varepsilon_{I-a-b}(z)&=\sum\limits_{A\in V_2}z^{\text{rank}(M_A)+\text{rank}(M_{A^c-a-b})}\\
&=\sum\limits_{A\in V_2}z^{\text{rank}(M_A)+\text{rank}(M_{A^c})-2}.
\end{align*}
\end{center}
Here we offer a little explanation for the equation 
\begin{center}
$\text{rank}(M_{A^c-a-b})=\text{rank}(M_{A^c})-2$.
\end{center}
Actually, since $A\in V_2$, we know that $a, b\notin A$ and $a, b\in A^c$. Then we have
\begin{center}
$\text{rank}(M_{A^c})=\text{rank}\begin{pmatrix}
0 & 1 & \textbf{0} \\
1 & 0 & \ast \\
\textbf{0} & \ast & \ast
\end{pmatrix}=\text{rank}\begin{pmatrix}
0 & 1 & \textbf{0} \\
1 & 0 & \textbf{0} \\
\textbf{0} & \textbf{0} & \ast
\end{pmatrix}=\text{rank}(M_{A^c-a-b})+2$.
\end{center}

As a consequence, we obtain the desired result
\begin{center}
$^\partial\varepsilon_{I}(z)=^\partial\varepsilon_{I-a}(z)+(2z^2)^\partial\varepsilon_{I-a-b}(z)$.
\end{center}
The proof is finished.
\end{proof}

\subsection{The partial-dual genus polynomial and the skew characteristic polynomial}
Let $\mathcal{G}$ denote the vector space spanned by all graphs over the ground field $\mathbb{C}$. It can be equipped with a multiplication and a comultiplication, where the multiplication $m: \mathcal{G}\otimes\mathcal{G}\to\mathcal{G}$ sends two graphs into the disjoint union and the comultiplication $\Delta :\mathcal{G}\to\mathcal{G}\otimes\mathcal{G}$ is defined as $\Delta(I)=\sum\limits_{A\subseteq V(I)}I_A\otimes I_{A^c}$. As before, here $I_A$ and $I_{A^c}$ denote the subgraphs of $I$ induced by $A$ and $A^c$ respectively. It is well known that, by extending linearly, the multiplication and comultiplication make the vector space $\mathcal{G}$ into a commutative cocommutative bialgebra. In particular, both the multiplication and the comultiplication respect the four-term relation of graphs \cite{Lan2000}.

Recently, Dogra and Lando \cite{DL2023} introduced the \emph{skew characteristic polynomial} of graphs, which is defined as
\begin{center}
$Q_I(w)=\sum\limits_{A\subseteq V(I)}\delta_{\text{rank}(M_I), \text{rank}(M_A)}w^{|V(I)|-|A|}$,
\end{center}
here $\delta_{i, j}=1$ if $i=j$, otherwise $\delta_{i, j}=0$. A refined version, called the \emph{refined skew characteristic polynomial}, was defined as follows
\begin{center}
$\overline{Q}_I(w, z)=\sum\limits_{A\subseteq V(I)}w^{|V(I)|-|A|}z^{\text{rank}(M_A)}$.
\end{center}
It was proved that both the skew characteristic polynomial and the refined skew characteristic polynomial of simple graphs satisfy the 4-term relation of graphs \cite{DL2023}. 

\begin{remark}
Here we made a little modification on the definition of the refined skew characteristic polynomial. The original version of it given in \cite{DL2023} is defined as
\begin{center}
$\overline{Q}_I(w, z)=\sum\limits_{A\subseteq V(I)}w^{|V(I)|-|A|}z^{\text{corank}(M_A)}$.
\end{center}
Obviously, for a fixed graph $I$, these two definitions are essentially the same. On the other hand, since $I$, $I_{ab}'$ and $\tilde{I}_{ab}$ have the same number of vertices, if one satisfies the 4-term relaition, so is the other one.
\end{remark}

Motivated by the concept of graded partial-dual genus polynomial, which was categorified in \cite{CL2024}, we consider the following two-variable polynomial
\begin{center}
$^\partial\overline{\varepsilon}_{I}(w, z)=\sum\limits_{A\subseteq V(I)}w^{|V(I)|-|A|}z^{\text{rank}(M_A)+\text{rank}(M_{A^c})}$,
\end{center}
called the \emph{refined partial-dual genus polynomial} of graphs. Similar to the proof of Theorem \ref{theorem2}, one observes that the refined partial-dual genus polynomial also satisfies the 4-term relation of graphs. Essentially speaking, the reason why both $\overline{Q}_I(w, z)$ and $^\partial\overline{\varepsilon}_{I}(w, z)$ are 4-invariants is that the rank of the adjacency matrix is a 4-invariant \cite{Lan2000}. The definitions above suggest that the refined skew characteristic polynomial and the refined partial-dual genus polynomial are closely related.

Recall that an element $p$ in a bialgebra is \emph{primitive} if $\Delta(p)=1\otimes p+p\otimes1$. Denote the subspace of primitive elements of $\mathcal{G}$ by $\mathcal{P}$, then there exists an explicit projection $\pi: \mathcal{G}\to\mathcal{P}$, which is given by
\begin{center}
$\pi(I)=I-1!\sum\limits_{A_1\cup A_2=V(I)}I_{A_1}I_{A_2}+2!\sum\limits_{A_1\cup A_2\cup A_3=V(I)}I_{A_1}I_{A_2}I_{A_3}+\cdots$,
\end{center}
where each sum runs over the partitions of $V(I)$ into unordered nonempty subsets. The readers are suggested to refer to \cite{Lan1997,Lan2000} for more details. It was proved in \cite{DL2023} that $Q_{\pi(I)}(w)$ is a constant for any graph $I$ with at least two vertices. It is an interesting question to determine $^\partial\varepsilon_{\pi(I)}(z)$ for any graph $I$. On the other hand, consider the graph invariant $r_I(z)=z^{\text{rank}(M_I)}$, then the partial-dual genus polynomial $^\partial\varepsilon_I(z)$ is almost equal to twice the second term of $r_{\pi(I)}(z)$. This suggests us to investigate the following generalization of the partial-dual genus polynomial
\begin{center}
$^\partial\varepsilon_I^{(k)}(z)=\sum\limits_{A_1\cup\cdots\cup A_k=V(I)}z^{\sum\limits_{i=1}^k\text{rank}(M_{A_i})}$,
\end{center}
here $A_i\cap A_j=\emptyset$ if $i\neq j$ and $A_i\neq\emptyset$ for each $1\leq i\leq k$. It may help us better understand the value of $r_I(z)$ under the projection of $\pi$.

\section*{Acknowledgement}
The main result of this paper was finished when the author was visiting Zhejiang Normal University in November, 2023. We wish to thank Zixi Wang for the hospitality. The author is grateful to Sergei Chmutov for interesting discussions and Xian'an Jin, Qingying Deng for encouragement. The author was supported by the NSFC grant 12371065.

\bibliographystyle{amsplain}
\bibliography{}
\begin{bibdiv}
\begin{biblist}
\bib{BG1996}{article}{
AUTHOR = {D. Bar-Natan},
author={S. Garoufalidis},
TITLE = {On the {M}elvin-{M}orton-{R}ozansky conjecture},
JOURNAL = {Invent. Math.},
VOLUME = {125},
YEAR = {1996},
NUMBER = {1},
PAGES = {103--133}}

\bib{Bec1977}{article}{
AUTHOR = {Beck, Istv\'an},
TITLE = {Cycle decomposition by transpositions},
JOURNAL = {J. Combinatorial Theory Ser. A},
VOLUME = {23},
YEAR = {1977},
NUMBER = {2},
PAGES = {198--207}}

\bib{CL2024}{article}{
AUTHOR = {Zhiyun Cheng},
author={Ziyi Lei},
TITLE = {A categorification for the partial-dual genus polynomial},
JOURNAL = {arXiv:2401.03632}}

\bib{Chm2009}{article}{
	author={S. Chmutov},
	title={Generalized duality for graphs on surfaces and the signed Bollob\'as-Riordan polynomial},
	journal={Journal of Combinatorial Theory, Series B},
	volume={99},
	year={2009},
	pages={717--638}}
	
\bib{Chm2023}{article}{
	author={S. Chmutov},
	title={Partial-dual genus polynomial as a weight system},
	journal={Communications in Mathematics}
 	volume={31},
	year={2023},
        number={3}
	pages={113--124}}

\bib{CF2021}{article}{
AUTHOR = {S. Chmutov},
author={V. Fabien},
TITLE = {On a conjecture of Gross, Mansour and Tucker},
JOURNAL = {European J. Combin.},
VOLUME = {97},
YEAR = {2021},
PAGES = {Paper No. 103368, 7}}

\bib{CL2007}{article}{
AUTHOR = {S. Chmutov},
author={S. Lando},
TITLE = {Mutant knots and intersection graphs},
JOURNAL = {Algebr. Geom. Topol.},
VOLUME = {7},
YEAR = {2007},
PAGES = {1579--1598}}

\bib{DL2023}{article}{
	author={R. Dogra},
	author={S. Lando},
    TITLE = {Skew characteristic polynomial of graphs and embedded graphs},
    JOURNAL = {Commun. Math.},
    VOLUME = {31},
    YEAR = {2023},
    NUMBER = {3},
    PAGES = {87--111}}

\bib{GMT2020}{article}{
	author={J. Gross},
	author={T. Mansour},
	author={T. Tucker}
	title={Partial duality for ribbon graphs, {I}: Distributions},
	journal={European Journal of Combinatorics},
	volume={86},
	year={2020},
	pages={103084}}
	
\bib{GMT2021}{article}{
	author={J. Gross},
	author={T. Mansour},
	author={T. Tucker}
	title={Partial duality for ribbon graphs, {II}: partial-twuality polynomials and monodromy computations},
	journal={European Journal of Combinatorics},
	volume={95},
	year={2021},
	pages={103329}}
	
\bib{GMT2021+}{article}{
	author={J. Gross},
	author={T. Mansour},
	author={T. Tucker}
	title={Partial duality for ribbon graphs, {III}: a {G}ray code algorithm for enumeration},
	journal={J. Algebraic Combin.},
	volume={54},
	year={2021},
	number={4},
	pages={1119--1135}}

\bib{Lan1997}{article}{
AUTHOR = {S. Lando},
TITLE = {On primitive elements in the bialgebra of chord diagrams},
BOOKTITLE = {Topics in singularity theory},
SERIES = {Amer. Math. Soc. Transl. Ser. 2},
VOLUME = {180},
PAGES = {167--174},
PUBLISHER = {Amer. Math. Soc., Providence, RI},
YEAR = {1997},}

\bib{Lan2000}{article}{
AUTHOR = {S. Lando},
TITLE = {On a {H}opf algebra in graph theory},
JOURNAL = {J. Combin. Theory Ser. B},
VOLUME = {80},
YEAR = {2000},
NUMBER = {1},
PAGES = {104--121}}

\bib{Mel2003}{article}{
AUTHOR = {B. Mellor},
TITLE = {A few weight systems arising from intersection graphs},
JOURNAL = {Michigan Math. J.},
VOLUME = {51},
YEAR = {2003},
NUMBER = {3},
PAGES = {509--536}}

\bib{YJ2022+}{article}{
   author={Yan, Qi},
   author={Jin, Xian'an}
   title={Counterexamples to the interpolating conjecture on partial-dual genus polynomials of ribbon graphs},
   journal={European J. Combin.},
   volume={102},
   year={2022},
   pages={Paper No. 103493, 7}}

\bib{YJ2022}{article}{
   author={Yan, Qi},
   author={Jin, Xian'an}
   title={Partial-dual polynomials and signed intersection graphs},
   journal={Forum Math. Sigma},
   volume={10},
   year={2022},
   pages={Paper No. e69, 16}}
   
\bib{YJ2022+}{article}{
   author={Yan, Qi},
   author={Jin, Xian'an}
   TITLE = {Twist polynomials of delta-matroids},
   JOURNAL = {Adv. in Appl. Math.},
   VOLUME = {139},
   YEAR = {2022},
   PAGES = {Paper No. 102363, 12}}
\end{biblist}
\end{bibdiv}
\end{document}